\title{Thermoacoustic Tomography in Elastic Media}
\author{Justin Tittelfitz}
\address{University of Washington}
\email{jtittelf@math.washington.edu}
\date{April 21, 2011}
\newtheorem{theorem}{Theorem}[section] 
\newtheorem{lemma}[theorem]{Lemma} 
\theoremstyle{definition} 
\newtheorem{definition}[theorem]{Definition}
\newcommand{\R}{\mathbb{R}}
\newcommand{\Rn}{\R^{n}}
\newcommand{\norm}[1]{\left\Vert#1\right\Vert}
\newcommand{\abs}[1]{\left\vert#1\right\vert}
\newcommand{\set}[1]{\left\{#1\right\}}
\newcommand{\grad}{\nabla}
\newcommand{\divergence}{\nabla \cdot}
\newcommand{\innp}[2]{\langle #1,#2\rangle}
\newcommand{\tr}{\mathrm{tr}}
\newcommand{\wt}[1]{\widetilde{#1}}
\newcommand{\HDO}{H_D(\Omega)}
\newcommand{\LTO}{L^2(\Omega)}
\begin{document}

\maketitle

\begin{abstract}
We investigate the problem of recovering the initial displacement~$f$ for a solution $u$ of a linear, isotropic, non-homogeneous elastic wave equation, given measurements of $u$ on $[0,T]\times \partial \Omega$, where $\Omega\subset\R^3$ is some bounded domain containing the support of  $f$.  For the acoustic wave equation, this problem is known as thermoacoustic tomography (TAT), and has been well-studied; for the elastic wave equation, the situation is somewhat more subtle, and we give sufficient conditions on the Lam\'e parameters to ensure that recovery is possible.
\end{abstract}

\section{Introduction}
In this paper, we consider the linear, isotropic elastic wave equation and Cauchy initial value problem given by
\begin{align}
	\begin{cases}
		(\partial_t^2 + P) u &= 0 \quad \textrm{  in  } (0,T) \times \R^3, \\
		u|_{t=0} &= f, \\
		\partial_t u |_{t=0} &= 0,
	\end{cases}
\label{PDEu}
\end{align}
where $u = (u_1,u_2,u_3)$ is the displacement vector,
\begin{align*}
	-Pu = \divergence \left( \mu(x) ((\grad u) + (\grad u)^T)\right) + \grad (\lambda(x) \divergence u),
\end{align*}
$\lambda$ and $\mu$ are the Lam\'e parameters, 
\begin{align*}
	\left(\grad u\right)_{i,j} = \frac{\partial u_i}{\partial x_j}
\end{align*}
is the Jacobian of $u$ and $(\grad u)^T$ is its transpose.  Equivalently, we can write
\begin{align*}
	-Pu = \mu \left( \Delta u + \nabla (\divergence u) \right) + \lambda \grad (\divergence u) + (\divergence u) \grad \lambda + \sum_{j=1}^3 \nabla \mu \cdot \left( \nabla u_j + \partial_j u \right) e_j.
\end{align*}
We will assume that $\lambda(x)$ and $\mu(x)$ are independent of $t$; we will also assume that they are each positive, in that there is some positive constant $\alpha_0$ so that $\lambda,\mu > \alpha_0$, and later, we will make further assumptions about bounds and smoothness.
Throughout, we will assume that our initial-data, $f$, is compactly supported in a set $\Omega$, which could, in general, be any bounded domain in $\R^3$, though for simplicity, we will take $\Omega = B(0,R)$ for some $R>0$.  The objective of thermoacoustic tomography in elastic media is to recover $f$, given the data
\begin{align*}
	\Lambda f := u|_{[0,T]\times \partial \Omega}.
\end{align*}
In particular, we will show that, if the Lam\'e parameters satisfy a condition on their relative size (briefly, that $\sup \sqrt{\mu} < 3 \inf\sqrt{\lambda + 2\mu}$) and one regarding their gradients (to be discussed later), then $f$ is recoverable via a Neumann series. Our method is strongly inspired by the techniques of thermoacoustic tomography for scalar wave equations, and we adapt the time-reversal approach used by Stefanov and Uhlmann in \cite{SU}. We devote the next section to discussing the existing results and methods in thermoacoustic tomography, and how they motivate the work in this paper.
\section{Background and Thermoacoustic Tomography}
Thermoacoustic tomography (hereafter TAT) is a method of medical imaging where the object of interest is exposed to a short electromagnetic (EM) pulse, absorbing some of the EM energy in the process.  
Because cancerous cells will absorb more of this energy than healthy cells will, it would be diagnostically useful to know the absorption function $a(x)$. To accomplish this, TAT makes use of the elastic expansion in the nearby tissue caused by the energy absorption, as well as the fact that this initial pressure distribution is roughly proportional to the absorption distribution. This initial pressure, in turn, leads to a pressure wave $p(t,x)$ that propagates through the object, and is then measured by transducers located on an observation surface $\Gamma$ surrounding the object for some length of time, the goal being to use this data to reconstruct the initial pressure. In much of the literature, the intended application is the imaging of cancer in a human breast, though some authors have written specifically about imaging the brain. This is somewhat more complicated, as the skull introduces a jump discontinuity in the sound speed, but under certain assumptions, recovery is still quite possible (see \cite{SU2}, \cite{QSUZ}).

Mathematically, we consider the scalar wave equation and Cauchy initial value problem
\begin{align*}
	\begin{cases}
		(\partial_t^2 + A) p &= 0 \quad \textrm{  in  } (0,T) \times \R^n, \\
		p|_{t=0} &= f, \\
		\partial_t p |_{t=0} &= 0,
	\end{cases}
\end{align*}
where $A(x,D) = - c^2(x) \Delta$,  (or more generally, $A(x,D) = -c^2(x)\Delta_g$, for some other metric $g$ on $\Rn$). The initial pressure function $f$ is typically assumed to be compactly supported inside of some bounded domain $\Omega \subset \Rn$ (corresponding to the object to be imaged), though in some work, $f$ is taken to be supported in \textit{some} compact set, or even merely $f \in L^p$ for $p > 2n/(n-1)$ (see \cite{ABK}). The observation surface $\Gamma$ is often taken to be $\partial \Omega$ (sometimes referred to as \textit{complete data}), though there are also satisfactory results for the case where $\Gamma$ is some other set, such as a portion of $\partial \Omega$ (likewise, \textit{incomplete data}; see, for instance, \cite{SU}, \cite{XKA}, \cite{XWKA}). The data one then collects is 
\begin{align*}
	\Lambda f := \set{ p(t,y): 0\leq t \leq T, y \in \Gamma}
\end{align*}
and from this, the goal of TAT is to recover $f$.

There are three main recovery methods used in TAT, and their applicability depends largely on the assumptions made about the geometry and physical attributes of the medium (in terms of $\Omega$ and $c(x)$) and the observation surface $\Gamma$. We will briefly discuss some of these here; for a detailed account and comparison of these techniques, including their relative advantages and disadvantages, see the excellent survey papers by Hristova, Kuchment and Nguyen \cite{HKN}, or Kuchment and Kunyansky \cite{KK1}, \cite{KK2}.

In the method of filtered backprojection, one assumes that $c$ is constant (i.e. the medium is acoustically homogeneous) and that the observation surface $\Gamma$ is a sphere of radius $R$.  We can then recover $f$ through integral formulas such as
\begin{align*}
	f(x) = - \frac{1}{8\pi^2 R}\Delta \int_\Gamma \frac{h(\abs{y-x},y)}{\abs{y-x}} dA(y),
\end{align*}
where 
\begin{align*}
	h(r,y) = \int_{\mathbb{S}^{n-1}} f(y + r\omega) r^{n-1} \ d\omega, \quad y\in \Gamma
\end{align*}
are the spherical integrals of $f$, with $dA$ and $d\omega$ are the surface measures on the respective spheres. For more on the integral geometry approach to TAT, see the work of Agranovsky, Berenstein and Kuchment \cite{ABK}, Finch, Patch and Rakesh \cite{FPR}, Xu and Wang \cite{XW}, Finch, Haltmeier and Rakesh \cite{FHR} and Kunyansky \cite{K2}.

The method of eigenfunction expansion applies to a slightly more general setting, in theory allowing $c$ to be variable, and for $\Gamma$ to be any closed surface (i.e. $\Gamma = \partial \Omega$ for some $\Omega$). We then seek to write $f$ as a Fourier series
\begin{align*}
	f(x) = \sum f_k \psi_k(x)
\end{align*}
where $\psi_k$ are eigenfunctions of the operator $-c^2(x) \Delta$ in $\Omega$ with Dirichlet boundary conditions on $\partial \Omega$, and then find the coefficients $f_k$ using integral formulas. In \cite{K}, Kunyansky showed, in the case $\Gamma$ is a cube and $c$ is constant, that $f$ can be recovered fast and precisely. Of course, for a complicated set $\Omega$, or a variable speed $c(x)$, the eigenfunctions and eigenvalues may not be known, and it is not clear whether this method can be effectively implemented.

The third method (and the method we will eventually use) is known as time-reversal, and was first proposed by Finch, Patch and Rakesh in \cite{FPR}, and first implemented by Burgholzer, Matt, Haltmeier and Paltauf in \cite{BMHP}. Here, $c$ is allowed to be variable (i.e. the medium can be non-homogeneous), and the restrictions on the geometry of $\Gamma$ are far less than those of the other two methods. The key assumption is that there is good local energy decay, meaning that for $f$ compactly supported in $\Omega$, the energy of the solution in $\Omega$ decays sufficiently fast as $t$ increases. To better illustrate the nature of this requirement, we first discuss an ideal case: the constant speed wave equation $(\partial^2_t - \Delta) p = 0$ in $\R^+ \times \R^3$. In this setting, Huygen's principle would apply, and we would know that for some $\wt T$, $u(t,x) = 0$ inside $\Omega$ for $t > \wt T$. In this case, we could recover $f$ by considering solutions of the initial value problem
\begin{align}
	\begin{cases}
		(\partial_t^2 - \Delta) q &= 0 \quad \textrm{  in  } \R^+ \times \R^n, \\
		q|_{t=\wt T} = q_t|_{t = \wt T} &= 0, \\
		q|_{\partial \Omega} = g(t,y)
	\end{cases}
\label{HUY}
\end{align}
where $g = \Lambda f$, and then solving the problem in the reverse time direction, since $q(0,x) = f(x)$ by uniqueness.

Of course, Huygen's principle does not apply in the general setting, but there is a useful analog which still leads to strong results. To ensure the kind of energy decay we need, we will assume that the speed is \textit{non-trapping}, meaning that all rays starting in $\Omega$ leave in finite time, and that the supremum of these times is also finite. Explicitly, for the Hamiltonian $H(x,\xi) = \frac{1}{2}c^2(x) \abs{\xi}^2$, we consider the solutions in $\R^{2n}_{x,\xi}$ of the system
\begin{align*}
	\begin{cases}
		x'_t = \frac{\partial H}{\partial \xi} = c^2(x)\xi, \\
		\xi'_t = -\frac{\partial H}{\partial x} = - \frac{1}{2} \grad (c^2(x)) \abs{\xi}^2, \\
		x|_{t=0} = x_0, \quad \xi|_{t=0} = \xi_0
	\end{cases}
\end{align*}
with $x_0 \in \Omega$ and $\xi_0 \neq 0$ (these solutions are called bicharacteristics, and their projections to $\Rn_x$ are often called rays). We say that $(\Omega, c)$ is non-trapping if each ray leaves $\Omega$ in finite time, and the supremum of these times is finite as well. For a non-trapping $(\Omega, c)$, we will call the supremum $T(\Omega)$. 

One may gain additional understanding of the nature of this definition by considering an example of a trapping metric: consider $c(x) = \abs{x}$, with $\Omega$ an annulus $\set{x: r_1 < \abs{x} < r_2}$. Then, for $x_0 \in \Omega$, if $\xi_0$ is perpendicular to $x_0$, it is straightforward to check that the resulting ray is $\set{x: \abs{x} = \abs{x_0}}$ (see Figure \ref{fig2}), which will remain inside $\Omega$ for all time.

\begin{figure}
\begin{center}
\includegraphics[width=5cm]{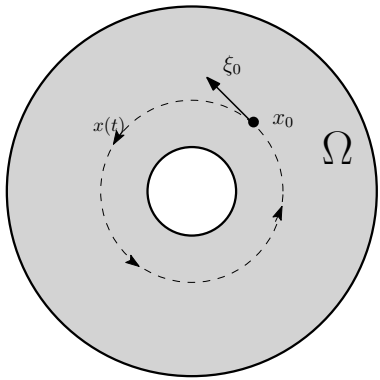}
\end{center}
\caption{An example of a trapped geodesic.}
\label{fig2}
\end{figure}

Because singularities propagate along bicharacteristics, the non-trapping hypothesis also ensures that any singularities of $f$ will have left $\Omega$ by $T(\Omega)$, or put another way, that $p(t,x)$ is a smooth function on $\Omega$ for $t > T(\Omega)$. This has been discussed extensively in the literature; see for instance \cite{D}, \cite{S} or \cite{T}.\label{microlocal}

Because we can no longer assume that $p$ eventually vanishes inside $\Omega$, it would be incorrect to think that, for any finite time $\wt T$, solutions $q$ of \eqref{HUY} will satisfy $q(0,x) = f(x)$, at least not exactly. To alleviate this, some authors have replaced the boundary condition by $q|_{\partial \Omega} = \chi(t) g(t,y)$, where $\chi$ is a smooth cutoff function vanishing near $\wt T > T(\Omega)$, and $\chi = 1$ near $(-\infty, T(\Omega))$ (see \cite{H}). In \cite{SU}, Stefanov and Uhlmann used a different time-reversal method (this will be discussed thoroughly in the final section of this paper), and showed that, for the acoustic wave equation with variable sound speed, it is possible to invert $\Lambda$ using a Neumann series. Subsequently, in \cite{QSUZ}, Qian, Stefanov, Uhlmann and Zhou went on to give a specific numerical algorithm for recovery based on this theoretical understanding. In this paper, we will follow the method of \cite{SU} when possible, assuming that the absorption of EM energy and subsequent elastic tissue expansion lead to elastic wave propagation instead of acoustic. 

The differences between scalar equations and systems will introduce some difficulties, the main difficulty being the unique continuation problem; i.e., determining when specifying Cauchy data on a hypersurface $S \subset \R\times\R^{3}$ is sufficient to uniquely determine the solution in a neighborhood of $S$. For the classical case (where the coefficients are smooth), this is Holmgren's theorem (see, for instance, \cite{TAY}). In the case of non-smooth coefficients for a scalar wave equation, there are good results due to Tataru (see \cite{T1},\cite{T2}). For the static Lam\'e system (i.e. the elliptic system $Pu = 0$), Lin, Nakamura, Uhlmann and Wang have recently shown (in \cite{LNUW}) that there is a strong unique continuation principle; specifically, if $\lambda \in L^\infty$ and $\mu \in C^{0,1}$, in $\Rn$ with $n\geq 2$, if a solution is zero in a neighborhood of any point, it is identically zero. For the elastic wave equation, however, the situation is somewhat more subtle; while some results exist, we will have to make assumptions about the Lam\'e parameters to acquire satisfactory results, the topic of the next section.
\section{Unique Continuation and Conditions on the Lam\'e Parameters}
One of the key attributes distinguishing the elastic wave equation from scalar wave equations is the presence of two speeds of propagation, the so-called P- and S-waves or modes (also known as compression and shear waves or modes).  In particular, these two speeds $c_1(x) = \sqrt{2\mu + \lambda}$  and $c_2(x) = \sqrt{\mu}$ correspond to the eigenvalues of the principal symbol of $P$:
\begin{align*}
	p(x,\xi) = (\lambda + 2\mu)(x) \xi \xi^T + \mu(x) \left(\abs{\xi}^2 I - \xi \xi^T \right).
\end{align*}
It will be useful to define scalar wave operators corresponding with these speeds, and so with $a_1 = 1 / (2\mu + \lambda)$ and $a_2 = 1 / \mu$, we define $\square_{a_j}, j = 1,2$ by
\begin{align*}
	\square_{a_j} := a_j \partial_t^2 - \Delta.
\end{align*}
The property of finite speed of propagation for the elasticity system is essentially the same as the scalar case, as demonstrated by the following definition and theorem.
\begin{definition}
We will say that $u$ has \textit{finite speed of propagation} in $ (0,T) \times B(x_0, \epsilon)$, with maximum speed $c >0$, if for any $t_0 \in [0,T)$, $u(t_0,\cdot) = u_t(t_0,\cdot) = 0$ in $B(x_0,\epsilon)$ implies $u = 0$ a.e. in the cone $\cup_{0<s<\epsilon/c}C_s$, where $C_s = \set{t = t_0 + s}\times B(x_0,\epsilon - cs)$.
\end{definition}

\begin{theorem} Assume that $\mu, \lambda \in C^2(\R^3)$, and suppose $u \in H^2$ solves \eqref{PDEu}.  Then, for any open ball $B(x_0,\epsilon) \subset \R^3$, $u$ has finite propagation speed in $(0,T) \times B(x_0,\epsilon)$, with maximum speed $c = \sup_{x \in B(x_0,\epsilon)} \sqrt{2\mu + \lambda}$.
\label{speed}
\end{theorem}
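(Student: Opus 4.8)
The plan is to establish finite speed of propagation via a standard energy estimate on truncated light cones, adapted to the elastic system. Fix $B(x_0,\epsilon)$ and a base time $t_0$, and suppose $u(t_0,\cdot) = u_t(t_0,\cdot) = 0$ on $B(x_0,\epsilon)$. Set $c = \sup_{B(x_0,\epsilon)} \sqrt{2\mu+\lambda}$; this is finite and positive since $\lambda,\mu \in C^2$ and $\lambda,\mu > \alpha_0$. For $s \in (0,\epsilon/c)$ consider the energy
\begin{align*}
	E(s) = \int_{B(x_0,\epsilon - cs)} \left( |u_t(t_0+s,x)|^2 + \sum_{i,j} \mu\, \big|\tfrac{\partial u_i}{\partial x_j} + \tfrac{\partial u_j}{\partial x_i}\big|^2 \cdot \tfrac{1}{2} + \lambda\, |\divergence u|^2 \right) dx,
\end{align*}
which is the natural elastic energy density (the quadratic form whose associated operator is $P$, positive because $\lambda,\mu>0$). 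The goal is to show $E(s) \le 0$, hence $E \equiv 0$, which forces $u_t \equiv 0$ and all the symmetrized gradient entries to vanish on each slice; combined with the vanishing Cauchy data this yields $u \equiv 0$ a.e. on the cone.

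First I would differentiate $E(s)$ in $s$. The term from shrinking the domain contributes a nonpositive boundary integral of the energy density times $c$ (by the coarea/Reynolds transport formula), while the term from differentiating the integrand produces, after integrating by parts in $x$, the expression $\int 2 u_t \cdot (u_{tt} + Pu)\,dx$ plus a boundary term involving the elastic traction (the conormal derivative associated with $P$) paired against $u_t$. Since $u$ solves \eqref{PDEu}, the interior term vanishes. The key pointwise inequality is then that the traction boundary term is dominated by $c$ times the energy density on the boundary sphere: this is a Cauchy--Schwarz estimate on the quadratic form, using that the largest eigenvalue of the elastic symbol is $(\lambda+2\mu)|\xi|^2$, so the flux through a surface with unit normal $\nu$ is bounded by $\sqrt{\lambda+2\mu}\,(\text{energy density}) \le c\,(\text{energy density})$. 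Hence $E'(s) \le 0$, and since $E(0) = 0$ we get $E(s) \le 0$ for all $s \in (0,\epsilon/c)$, as desired.

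I expect the main obstacle to be the boundary flux estimate: one must show carefully that for the symmetric form $Q(\nabla u) = \mu\, |\operatorname{sym}\nabla u|^2_{\text{(scaled)}} + \lambda |\divergence u|^2$ and its polarization, the traction $\sigma(u)\nu = \mu(\nabla u + (\nabla u)^T)\nu + \lambda(\divergence u)\nu$ satisfies $|u_t \cdot \sigma(u)\nu| \le \sqrt{2\mu+\lambda}\,\big(|u_t|^2 + Q(\nabla u)\big)/2 \cdot$(constant), i.e. that the correct constant is exactly the P-wave speed and not something larger. This is where the eigenvalue structure of $p(x,\xi)$ — that $\lambda+2\mu$ is the top eigenvalue and $\mu$ the others — enters decisively, and it must be done with the $x$-dependence of $\lambda,\mu$ frozen pointwise on the sphere. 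A secondary technical point is justifying the differentiation of $E(s)$ and the integrations by parts at the $H^2$ regularity level, which is routine by density of smooth functions or by a Friedrichs mollification argument, and the fact that the lower-order terms in $P$ (those involving $\grad\lambda$, $\grad\mu$) are harmless because they are absorbed into $u_t \cdot Pu = 0$ and do not affect the principal flux.
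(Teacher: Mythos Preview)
Your energy--cone argument is the standard route and is essentially correct; the flux estimate you flag as the main obstacle does hold with the sharp constant $\sqrt{2\mu+\lambda}$, as one checks by writing $\sigma(u)\nu$ in terms of the strain $e=\operatorname{sym}\nabla u$ and verifying the pointwise quadratic inequality $|\sigma\nu|^2 \le (2\mu+\lambda)\bigl(2\mu|e|^2+\lambda(\operatorname{tr} e)^2\bigr)$ (equality occurs precisely for the rank-one strain $e=\nu\nu^T$, i.e.\ the $P$-mode). The deduction of $u\equiv 0$ on the cone from $E\equiv 0$ is fine once you note that $u_t(t_0+\tau,x)=0$ for $x\in B(x_0,\epsilon-cs)$ whenever $0\le\tau\le s$, so integrating in time against the vanishing initial data gives the conclusion.

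As for comparison with the paper: there is nothing to compare. The paper does not prove Theorem~\ref{speed} at all; it simply refers the reader to McLaughlin--Yoon \cite{MY}. Your outline is exactly the classical energy method one expects that reference to carry out, so you are not diverging from the paper's approach so much as supplying what it omitted.
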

For a proof, we direct the reader to \cite{MY}.  With this result in mind, we will hereafter assume that there exist constants $c^-, c^+$  so that
\begin{align}
c^+ = \sup_{x \in \R^3} \sqrt{2\mu + \lambda}<\infty,\\
c^- = \inf_{x \in \R^3} \sqrt{\mu} > 0.
\end{align}
We will also assume that $(\Omega, P)$ is \textit{non-trapping}, meaning (recalling the definition and discussion for the scalar case given on page~\pageref{microlocal}) that every ray (i.e. the projection of every bicharacteristic starting in $\Omega$) leaves $\Omega$ in finite time, and that the supremum of these times, which we will again call $T(\Omega)$, is also finite. 

The presence of two speeds of propagation causes some difficulty regarding questions of unique continuation. For a homogeneous medium (i.e., the Lam\'e parameters are constant), the system is diagonalizable, and the P- and S-modes are preserved throughout the wave's evolution, effectively reducing this problem to the scalar case. In the more general setting however, P-waves may transmit or reflect as S-waves (and vice-versa) at an interface, making the question of unique continuation more subtle. For instance, if the P-wave vanishes on the boundary of a set for all time, one cannot necessarily conclude it vanishes on the interior as well; it may simply be transmitting as an S-wave instead. The extent to which the two modes can be decoupled is useful for understanding the reflection and transmission of singularities, and has been studied using the pseudodifferential calculus (see \cite{TAY2} and \cite{TAY} for smooth $\mu$ and $\lambda$, and most recently, \cite{BHSU} for $\mu,\lambda \in C^{1,1}$).

Returning to unique continuation, many satisfactory results have been proven via Carleman estimates, and a thorough discussion can be found in the work of Eller, Isakov, Nakamura and Tataru, as well as that of Cheng, Isakov, Yamamoto and Zhou (see \cite{EINT} and \cite{CIYZ}, respectively) and other authors.  In the following two theorems, $u$ will be a solution of \eqref{PDEu}, $\Omega '$ is an open domain in $\R^3$, and $T'$ will be some positive real number (in practice, we will have $\Omega ' $ containing $\Omega$, and $T'$ larger than $T(\Omega)$). Both of these results are from \cite{EINT}; the former is a slightly simplified statement of Corollary 3.5, and the latter is Theorem 5.5.
\begin{theorem}(Eller, Isakov, Nakamura, Tataru)\\
Let $a_1 = \frac{1}{\mu}$, $a_2 = \frac{1}{\lambda + 2\mu}$ and assume, for some $\theta > 0$, they both satisfy
\begin{align}
	\theta^2 a_j ( a_j + a_j^{-1/2} \abs{t\grad a_j} ) < a_j + 1/2 x \cdot \grad a_j
\label{condition1}
\end{align}
and
\begin{align}
	\theta^2 a_j \leq 1
\label{condition2}
\end{align}
on $[- T', T'] \times \overline{\Omega '}$, that $a_j \in C^1([-T', T'] \times \overline{\Omega '})$, and that $\Omega ' \subset B(0, \theta T')$.
Then, if $u=0$ and $\partial_\nu u = 0$ on $(-T', T') \times \partial \Omega '$, then $u(t,x)=0$ when $\abs{x}^2 > \theta^2 t^2$.
\label{tataru1}
\end{theorem}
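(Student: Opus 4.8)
The plan is to prove Theorem~\ref{tataru1} by a Carleman estimate, following the standard scheme for unique continuation across a strongly pseudoconvex hypersurface; the two scalar inequalities \eqref{condition1}--\eqref{condition2} are exactly the pseudoconvexity conditions, one for each of the wave operators $\square_{a_1}$ and $\square_{a_2}$. The target surface is $S = \set{(t,x): \abs{x}^2 = \theta^2 t^2}$, and the first step is the usual reduction to a local statement. Since $u$ and $\partial_\nu u$ vanish on $(-T',T')\times\partial\Omega'$ and $u\in H^2$ solves \eqref{PDEu}, the extension of $u$ by zero outside $\Omega'$ (still denoted $u$) is an $H^2$ solution of $(\partial_t^2 + P)u = 0$ on a full neighborhood of $[-T',T']\times\overline{\Omega'}$, vanishing for $\abs{x}$ bounded away from the origin by more than $\sup_{\Omega'}\abs{x}$. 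It then suffices to show that near each point of $S$, $u$ vanishes on the side $\abs{x}^2 > \theta^2 t^2$; the global conclusion follows by foliating with the level sets $\set{\abs{x}^2 - \theta^2 t^2 = c}$ and letting $c$ decrease from a value for which the surface sits inside the region where $u$ is already known to vanish down to $c = 0^{+}$. The hypothesis $\Omega' \subset B(0,\theta T')$ is precisely what keeps this entire foliation inside the slab $\abs{t} < T'$ where the data are available.

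The analytic core is a Carleman estimate for the vector operator $\partial_t^2 + P$. Its principal symbol is $-\tau^2 I + (\lambda + 2\mu)\xi\xi^T + \mu(\abs{\xi}^2 I - \xi\xi^T)$, which is block-diagonalized by the splitting of a field into its gradient and curl parts, so its characteristic variety is the union of the cones $\set{\tau^2 = (\lambda+2\mu)\abs{\xi}^2}$ and $\set{\tau^2 = \mu\abs{\xi}^2}$, i.e. those of $\square_{a_2}$ and $\square_{a_1}$ respectively. I would work with a convexified weight $\phi = e^{\gamma\psi}$, $\psi = \abs{x}^2 - \theta^2 t^2$, for large $\gamma$, and establish, for $\tau \gg 1$ and $\varphi \in C_0^\infty$ supported in a small lens on the $\set{\psi > 0}$ side of a point of $S$,
\[
	\tau\int e^{2\tau\phi}\paren{\abs{\tau\varphi}^2 + \abs{\grad_{t,x}\varphi}^2}\,dt\,dx \;\le\; C\int e^{2\tau\phi}\,\abs{(\partial_t^2 + P)\varphi}^2\,dt\,dx .
\]
To prove this I would pass to the compression and shear parts $\divergence\varphi$ and $\curl\varphi$: the former satisfies $\square_{a_2}(\divergence\varphi)$ equal to a first-order expression in $(\divergence\varphi,\curl\varphi)$ with coefficients built from $\grad\lambda,\grad\mu$, the latter an analogous identity with $\square_{a_1}$; one then invokes the classical scalar Carleman estimate for each $\square_{a_j}$ with the \emph{same} weight $\phi$, adds the two, and recovers control of $\varphi$ itself from $\Delta\varphi = \grad(\divergence\varphi) - \curl(\curl\varphi)$ together with the vanishing Cauchy data. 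Condition \eqref{condition2}, $\theta^2 a_j \le 1$, says exactly that the level sets of $\psi$ are non-characteristic (spacelike) for $\square_{a_j}$, and \eqref{condition1} is Hörmander's strong-pseudoconvexity inequality for $\square_{a_j}$ relative to $\psi$, with the extra term $a_j^{-1/2}\abs{t\grad a_j}$ built in to leave room for absorbing the first-order coupling terms produced by the non-constant coefficients; because both hold with the same $\theta$, one weight is pseudoconvex for both operators at once, which is what lets the two scalar estimates combine.

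The conclusion is then routine: apply the combined estimate to $\varphi = \chi u$ with $\chi$ a cutoff equal to $1$ near the chosen point of $S$ and supported on the $\set{\psi > 0}$ side; $(\partial_t^2 + P)\varphi = [\partial_t^2 + P,\chi]u$ is supported where $\psi$, hence $\phi$, is strictly below its value at the point, so letting $\tau \to \infty$ forces $u \equiv 0$ near the point inside $\set{\psi > 0}$, and the foliation argument finishes the proof. I expect the main obstacle to be precisely the coupling of the P- and S-modes: one must verify that a single weight can be made strongly pseudoconvex for both $\square_{a_1}$ and $\square_{a_2}$, and that the first-order terms coupling $\divergence u$ and $\curl u$ — together with the step recovering $u$ from them, which is delicate since the coefficients are only $C^1$ and the reduction must not differentiate them twice — are genuinely dominated by the $\tau$-gain. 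It is the failure of this for arbitrary Lam\'e parameters, rather than any lack of smoothness, that forces the structural hypotheses \eqref{condition1}--\eqref{condition2}.
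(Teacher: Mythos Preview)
The paper does not give its own proof of this theorem: it is quoted from \cite{EINT} (as a slight simplification of their Corollary~3.5), so there is no in-paper argument to compare against. That said, your sketch tracks the method of the cited reference quite faithfully: a Carleman estimate with weight built from $\psi = \abs{x}^2 - \theta^2 t^2$, the reduction of the elastic system to the pair of scalar operators $\square_{a_1},\square_{a_2}$ via $\divergence u$ and $\curl u$, scalar Carleman estimates for each with the \emph{same} weight, absorption of the first-order coupling terms using the $\tau$-gain, and a level-set foliation to pass from the local statement to the global one. Your reading of \eqref{condition2} as the non-characteristic (spacelike) condition and of \eqref{condition1} as the strong-pseudoconvexity inequality for $\square_{a_j}$ relative to $\psi$ is correct, as is the observation that the hypothesis $\Omega'\subset B(0,\theta T')$ is what confines the foliation to the time slab where Cauchy data are prescribed.

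The one place your outline is a bit optimistic is the step ``recover $\varphi$ from $\divergence\varphi$ and $\curl\varphi$ via $\Delta\varphi = \grad(\divergence\varphi) - \curl(\curl\varphi)$.'' In the actual argument one does not invert the Laplacian; rather, one derives a \emph{principally diagonal} first-order system for the vector $(u,\divergence u,\curl u)$ and proves the Carleman estimate for that enlarged system directly, so that control of $u$ itself comes out simultaneously with control of its divergence and curl. This avoids the loss of a derivative you would otherwise incur (the coefficients are only $C^1$, as you note). With that adjustment, your plan is essentially the proof in \cite{EINT}.
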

\begin{theorem}
Assume that the coefficients $\mu, \lambda \in C^3$ are time independent.  Let $S$ be a noncharacteristic surface with respect to both  $\square_{a_1}$ and $\square_{a_2}$.  Then we have unique continuation across $S$ for $H^1_{loc}$ solutions $u$ to \eqref{PDEu}.
\label{tataru2}
\end{theorem}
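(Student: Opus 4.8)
To prove Theorem~\ref{tataru2} the plan is to reduce the elasticity system to a system of scalar wave equations that is block-diagonal modulo terms of order one, with the two diagonal blocks governed by $\square_{a_1}$ and $\square_{a_2}$, and then to invoke the Carleman-estimate theory of unique continuation for scalar second-order operators whose coefficients are analytic in the time variable. The reduction is available precisely because the two speeds are strictly separated: since $\lambda > \alpha_0 > 0$, the eigenvalues $(\lambda+2\mu)(x)\abs{\xi}^2$ (of multiplicity one) and $\mu(x)\abs{\xi}^2$ (of multiplicity two) of the principal symbol $p(x,\xi)$ of $P$ never collide for $\xi \neq 0$, so the corresponding spectral projections $\Pi_1(x,\xi), \Pi_2(x,\xi)$ are homogeneous of degree $0$ in $\xi$ and as smooth in $x$ as the Lam\'e parameters. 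Morally the decoupling is already visible from the substitution $v := \divergence u$, $w := \curl u$, which (using $\Delta u = \grad(\divergence u) - \curl(\curl u)$) satisfy scalar wave equations with speeds $\sqrt{\lambda + 2\mu}$ and $\sqrt{\mu}$ respectively, up to terms carrying the gradients of $\mu, \lambda$; but to get a genuinely closed system one quantizes $\Pi_1, \Pi_2$ and conjugates $\partial_t^2 + P$ by the resulting elliptic $\Psi$DO system, producing a new three-component unknown $\wt u$ (with $u = Q\wt u$, $Q$ elliptic of order $0$) solving $\mathcal L\wt u = 0$, where the principal part of $\mathcal L$ is $\mathrm{diag}(\square_{a_1}, \square_{a_2}, \square_{a_2})$ and \emph{every} remaining term is a differential/pseudodifferential operator of order $\le 1$ in $\wt u$; the hypothesis $\mu,\lambda \in C^3$ is what legitimizes this conjugation and the estimates that follow.

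The second ingredient is that $\mu, \lambda$ are independent of $t$, hence trivially real-analytic in $t$. For each scalar operator $\square_{a_j}$ one is therefore in the setting of the Tataru--Robbiano--Zuily--H\"ormander theorem on unique continuation for operators with partially analytic coefficients, whose conclusion is unique continuation across \emph{every} hypersurface that is noncharacteristic with respect to $\square_{a_j}$ -- no pseudoconvexity beyond noncharacteristicity is needed, because analyticity in $t$ supplies the pseudoconvexity otherwise missing in the cone direction. Concretely, one fixes a reference point $(t_0,x_0) \in S$ and the side of $S$ on which $u$ is assumed to vanish, chooses a sufficiently convexified weight $\varphi$ whose level surface through $(t_0,x_0)$ is tangent to $S$ there and bends into the vanishing region, and proves a Carleman estimate of the shape $\tau^3\norm{e^{\tau\varphi}\psi}^2 + \tau\norm{e^{\tau\varphi}\grad_{t,x}\psi}^2 \leq C\norm{e^{\tau\varphi}\square_{a_j}\psi}^2$ for $\psi$ supported near $(t_0,x_0)$; noncharacteristicity makes H\"ormander's subellipticity condition hold in the non-analytic variables after enough convexification, and an FBI transform in $t$ takes care of the analytic direction.

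One then assembles the pieces. Since $S$ is assumed noncharacteristic for \emph{both} $\square_{a_1}$ and $\square_{a_2}$, and since ``noncharacteristic after convexification'' is an open condition, a single weight $\varphi$ serves both blocks near $(t_0,x_0)$; summing the block estimates gives, for $\wt u$ cut off near $(t_0,x_0)$,
\[
\sum_{j=1}^{3} \Big( \tau^3\norm{e^{\tau\varphi}\wt u_j}^2 + \tau\norm{e^{\tau\varphi}\grad_{t,x}\wt u_j}^2 \Big) \leq C\Big( \norm{e^{\tau\varphi}\grad_{t,x}\wt u}^2 + \norm{e^{\tau\varphi}\wt u}^2 \Big),
\]
the right-hand side arising from bounding $\mathcal L\wt u$ minus its principal part, which is of order $\le 1$; taking $\tau$ large absorbs the right side into the left, and the standard cutoff argument then forces $\wt u$, hence $u$, to vanish near $(t_0,x_0)$ -- the asserted local unique continuation -- after which a connectedness argument along $S$ globalizes it. (As the solutions are only $H^1_{loc}$, one either runs the estimate on mollified solutions and passes to the limit or uses an $H^1$ version of the Carleman estimate; this is routine.) The main obstacle is the combination of inter-mode coupling with the limited regularity: first-order terms are borderline for Carleman estimates, so one must check that the diagonalization has genuinely demoted \emph{all} coupling to order $\le 1$ and that one weight and one large $\tau$ serve both blocks; moreover, in the $\Psi$DO route the Carleman estimate must be carried through the conjugated calculus -- $e^{\tau\varphi}$ does not commute with $\mathrm{Op}(\Pi_j)$ -- which, with only $C^3$ coefficients, is the most delicate bookkeeping in the proof. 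The presence of two speeds, by contrast, is not itself an obstacle: once $S$ is noncharacteristic for each scalar factor, the convexification can be performed for both simultaneously.
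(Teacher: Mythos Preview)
The paper does not prove this theorem at all: it is quoted verbatim as Theorem~5.5 of \cite{EINT} (Eller, Isakov, Nakamura, Tataru), and the paper simply invokes it as a black box in the proof of Theorem~\ref{uniqueness}. So there is no ``paper's own proof'' to compare against.

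That said, your sketch is a faithful outline of how the result is actually established in \cite{EINT}: one reduces the elastic system to a principally diagonal system with blocks $\square_{a_1}$ and $\square_{a_2}$ (either via the divergence/curl substitution or via quantizing the spectral projections of $p(x,\xi)$), and then applies the Tataru--Robbiano--Zuily--H\"ormander unique continuation machinery for operators with coefficients analytic in $t$, absorbing the first-order coupling by the large-$\tau$ gain in the Carleman estimate. Your identification of the delicate points --- that the coupling must genuinely be demoted to order $\le 1$, that a single weight must serve both blocks, and that the conjugation by $e^{\tau\varphi}$ interacts nontrivially with the $\Psi$DO calculus at $C^3$ regularity --- is accurate. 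One small refinement: in \cite{EINT} the reduction is carried out via the explicit $(\divergence u, \curl u)$ substitution (your ``moral'' version) rather than a full $\Psi$DO diagonalization, which sidesteps the commutator bookkeeping you flag at the end; the resulting first-order system in $(u, \divergence u, \curl u)$ is principally diagonal with \emph{differential} coupling of order zero in those variables, and the Carleman estimate is then applied directly. This is slightly more elementary than the $\Psi$DO route you propose, though both lead to the same conclusion.
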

With the assistance of these theorems, we conclude this section by proving a unique continuation result needed later for the reconstruction process. Essentially, we seek to answer the following question: 

Suppose a solution $u$ of \eqref{PDEu}, with $u(0,x) = f(x) = 0$ outside $\Omega$, also vanishes outside $\Omega$ at some later time $T$. From this, can we determine if $f(x) = 0$ inside $\Omega$ as well? Because of finite propagation speed, this is certainly not the case for all times $T>0$ (let $T$ be small, and let the support of $f$ be, for instance, some small ball contained in $\Omega$), and so we seek to describe sufficient conditions for making such a determination. 
\begin{theorem}[Sufficient conditions for the Lam\'e Parameters]
Suppose that $\mu,~\lambda~\in~C^3$ are time independent, and that the maximum and minimum speeds of propagation satisfy the inequality
\begin{align}
	c^+ < 3 c^-,
\end{align}
that there exist $\theta$, $T$, and $\epsilon > 0$ so that
\begin{align}
	\frac{1}{3} c^+ < \theta < c^-\\
	T > \frac{2(R+\epsilon)}{3\theta - c^+}
\end{align}
and that $(a_1,a_2,T,\theta)$ satisfy the gradient condition (\ref{condition1}) on
\begin{align*}
	 \left[ -\frac{3T}{2}, \frac{3T}{2} \right] \times \overline{B\left(0,R + \frac{T}{2}c^+ + \epsilon \right)}.
\end{align*}
Assume also that the surface $S = \set{(t,x): \abs{x}^2 = \theta^2 t^2}$ is non-characteristic for $\square_{a_1}, \square_{a_2}$.
Then, for solutions $u$ of \eqref{PDEu} (with $f$ compactly supported in $\Omega = B(0,R)$), if $u(T,x) = 0$ for $x \notin \Omega$, we have $f(x) = 0$.
\label{uniqueness}
\end{theorem}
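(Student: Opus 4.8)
The plan is to reduce the statement to one application of the Eller--Isakov--Nakamura--Tataru unique continuation result (Theorem~\ref{tataru1}) on the cylinder $\left[-\tfrac{3T}{2},\tfrac{3T}{2}\right]\times\overline{B(0,\rho)}$, where $\rho:=R+\tfrac{T}{2}c^{+}+\epsilon$. That theorem yields $u=0$ outside the cone $S=\{|x|^{2}=\theta^{2}t^{2}\}$, and since this cone degenerates to the single point $x=0$ at $t=0$, it forces $f\equiv0$. First I would pass to the even extension in time: since $\partial_{t}u|_{t=0}=0$ and $P$ does not depend on $t$, putting $u(-t,x):=u(t,x)$ gives a solution of \eqref{PDEu} on $(-T,T)\times\R^{3}$, and Theorem~\ref{speed} gives $\supp u(t,\cdot)\subseteq\overline{B(0,R+|t|c^{+})}$; in particular $u$ vanishes on $\{|x|=\rho\}$ for $|t|<\tfrac{T}{2}+\epsilon/c^{+}$, while the hypothesis says $u(\pm T,\cdot)$ is actually supported in $\overline{B(0,R)}$, compactly inside $B(0,\rho)$.

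The core step, which I expect to be the main obstacle, is to manufacture from $u$ a solution of \eqref{PDEu} --- call it $\tilde u$ --- defined near $\left[-\tfrac{3T}{2},\tfrac{3T}{2}\right]\times\overline{B(0,\rho)}$, agreeing with $u$ near $t=0$, whose Cauchy data $(\tilde u,\partial_{\nu}\tilde u)$ vanishes on the whole lateral boundary $\left[-\tfrac{3T}{2},\tfrac{3T}{2}\right]\times\{|x|=\rho\}$ --- not merely on the sub-cylinder $|t|<\tfrac{T}{2}+\epsilon/c^{+}$ that finite speed alone delivers. The mechanism I have in mind is reflection of the solution in time about $t=\pm T$ together with the use of $\partial_{t}u|_{t=0}=0$: since $P$ is $t$-independent, $u(2T-t,x)$ and $u(-2T-t,x)$ solve \eqref{PDEu}, and it is precisely the hypothesis $u(\pm T,\cdot)\equiv0$ outside $\Omega$ (which collapses the support of the reflected pieces back into a neighborhood of $\overline{B(0,R)}$ near $t=\pm T$) that should make the appropriate combination vanish on $\{|x|=\rho\}$ for all $|t|\le\tfrac{3T}{2}$. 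It is in carrying this out that the inequality $T>\tfrac{2(R+\epsilon)}{3\theta-c^{+}}$ --- equivalently $\tfrac{3T}{2}\theta>\rho$ --- and the lower bound $\theta>\tfrac{1}{3}c^{+}$ on the auxiliary speed are used; the difficulty is that with two propagation speeds and no Huygens' principle one must be careful that the time-reflected object is a genuine solution in a full neighborhood of the cylinder, and this is the heart of the proof.

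Granting such a $\tilde u$, I would apply Theorem~\ref{tataru1} with $T'=\tfrac{3T}{2}$, $\Omega'=B(0,\rho)$ and the given $\theta$. The inclusion $\Omega'\subset B(0,\theta T')$ is precisely $\tfrac{3T}{2}\theta>\rho$; the bound \eqref{condition2} holds because each of $a_{1},a_{2}$ is at most $1/(c^{-})^{2}$ (as $\sqrt{\mu}\ge c^{-}$ and $\sqrt{\lambda+2\mu}>\sqrt{\mu}$), so $\theta^{2}a_{j}\le(\theta/c^{-})^{2}<1$; the gradient bound \eqref{condition1} is assumed on exactly the set $\left[-\tfrac{3T}{2},\tfrac{3T}{2}\right]\times\overline{B(0,\rho)}$; and $a_{1},a_{2}\in C^{1}$ because $\mu,\lambda\in C^{3}$ are bounded below by $\alpha_{0}>0$. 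The assumed non-characteristicity of $S$ with respect to $\square_{a_{1}},\square_{a_{2}}$ is what the Carleman estimate behind Theorem~\ref{tataru1} requires, and it is also exactly the hypothesis of Theorem~\ref{tataru2}, which can be used to propagate the vanishing of $u$ across $S$ into $\{|x|^{2}<\theta^{2}t^{2}\}$ if one wants $u\equiv 0$ outright. Theorem~\ref{tataru1} then gives $\tilde u(t,x)=0$ whenever $|x|^{2}>\theta^{2}t^{2}$.

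Finally, at $t=0$ the condition $|x|^{2}>\theta^{2}t^{2}$ is just $x\neq0$, so $\tilde u(0,x)=0$ for all $x\neq0$; since $\tilde u$ coincides with $u$ near $t=0$ we get $f(x)=\tilde u(0,x)=0$ for $x\neq0$, hence $f\equiv0$.
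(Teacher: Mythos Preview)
Your overall strategy --- even-extend in time, show the solution has vanishing Cauchy data on the full lateral boundary of the cylinder $\bigl[-\tfrac{3T}{2},\tfrac{3T}{2}\bigr]\times\partial B(0,\rho)$ with $\rho=R+\tfrac{T}{2}c^{+}+\epsilon$, apply Theorem~\ref{tataru1}, then conclude $f\equiv0$ --- is exactly the paper's. Your verification of the hypotheses of Theorem~\ref{tataru1} (the inclusion $\rho<\tfrac{3T}{2}\theta$, the bound $\theta^{2}a_{j}<1$ from $\theta<c^{-}$, the assumed gradient condition) is correct and matches the paper line for line.

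Where you diverge is in the step you call ``the main obstacle'' and ``the heart of the proof'': the construction of a reflected solution $\tilde u$ via time-reflection about $t=\pm T$. This is an unnecessary complication, and as you yourself note, it is not clear how to make the glued object a genuine solution (matching at $t=\pm T$ would require $u_t(\pm T,\cdot)\equiv0$, not just vanishing outside $\Omega$). The paper's route is much simpler: apply finite speed of propagation (Theorem~\ref{speed}) a \emph{second} time, from $t=T$ rather than only from $t=0$. The hypothesis $u(T,\cdot)=0$ outside $\Omega$ (together with $u_t(T,\cdot)=0$ outside $\Omega$, which the paper uses implicitly and which holds in the application) gives directly $u(t,x)=0$ whenever $|x|-R>c^{+}|T-t|$. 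Combining this cone with the one emanating from $t=0$ already yields $u=0$ on $\{|x|-R>\tfrac{T}{2}c^{+}\}$ for $-T/2\le t\le 3T/2$; one even reflection about $t=0$ then covers all of $[-\tfrac{3T}{2},\tfrac{3T}{2}]$. No auxiliary $\tilde u$ is needed --- the original $u$ already has the required lateral vanishing.

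At the end you pass from $f(x)=0$ for $x\neq0$ to $f\equiv0$ by regularity; the paper instead invokes Theorem~\ref{tataru2} to propagate vanishing across the (assumed non-characteristic) cone $S$ down to the origin. Either argument closes the proof.
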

\begin{proof}
By assumption, we know
\begin{align*}
	u(T,x) = 0, \quad \textrm{for } x \notin \Omega,
\end{align*}
and since $f$ is compactly supported in $\Omega$, we know
\begin{align*}
	u(0,x) = 0, \quad \textrm{for } x \notin \Omega
\end{align*}
as well.  Thus, by finite speed of propagation, we then have both
\begin{align*}
	u(t,x) = 0 \quad \textrm{when }\abs{x} - R > c^+\abs{T-t}
\end{align*}
and
\begin{align*}
	u(t,x) = 0 \quad \textrm{when }\abs{x} - R> c^+\abs{t}.
\end{align*}
Combining these observations shows (see figure \ref{fig1})
\begin{align*}
		u(t,x) = 0 \quad \textrm{when }\abs{x} - R > \frac{T}{2}c^+, \ -T/2 \leq t \leq 3T/2.
\end{align*}
Next, by time-reversal, $u$ extends to an even function of $t$, so, in fact, we have
\begin{align*}
		u(t,x) = 0 \quad \textrm{when }\abs{x} - R > \frac{T}{2}c^+, \ -3T/2 \leq t \leq 3T/2.
\end{align*}
Now, because $\theta < c^-$, we have $\theta^2 a_j \leq 1$ for $j=1,2$, and by hypothesis, the gradient condition is satisfied on $\overline{B\left(0,R + \frac{T}{2}c^+ + \epsilon\right)}\times \left[ -\frac{3T}{2}, \frac{3T}{2} \right]$.  Because $\theta > c^+/3$ and $T > 2(R + \epsilon)/(3\theta - c^+)$, we check that
\begin{align}
	R + \frac{T}{2}c^+ +\epsilon < \frac{T}{2}(3\theta - c^+) + \frac{T}{2}c^+ < \frac{3T}{2}\theta 
\label{inequality}
\end{align}
so that we can apply Theorem \ref{tataru1}, with $\Omega ' = B(0,R + \frac{T}{2}c^+ + \epsilon)$ and $T' =\frac{3T}{2}$.  Thus, $u(t,x) = 0$ whenever $\abs{x}^2 > \theta^2 t^2$, showing that $u(0,x) = 0$, except possibly at the origin.  Theorem \ref{tataru2} allows us to extend this solution uniquely, and thus $u(0,0) = 0$ as well, showing $f \equiv 0$.
\end{proof}
\begin{figure}
\begin{center}
\includegraphics[width=10cm]{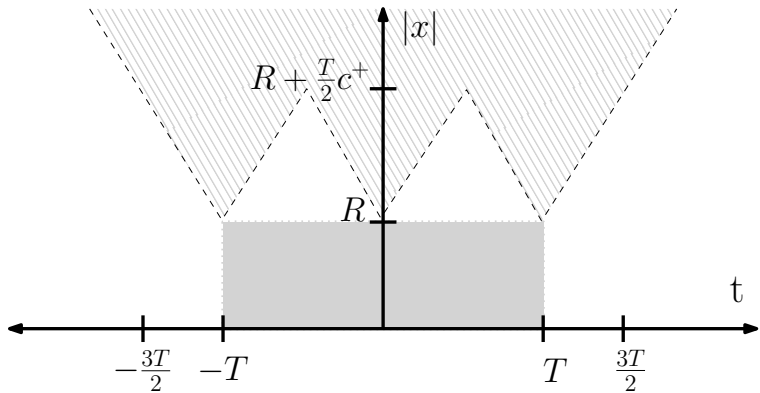}
\end{center}
\caption{The solid gray region is $B(0,R)\times(-T,T)$; the dashed region where $u=0$ by finite speed of propagation and time-reversal.}
\label{fig1}
\end{figure}
Before we continue, we make a few remarks about the conditions placed on the Lam\'e parameters in this theorem.  First, the condition $c^+ < 3c^-$ is needed to guarantee we can choose $\theta$ so that $\frac{1}{3} c^+ < \theta < c^-$; we need $\theta < c^-$ to ensure condition \eqref{condition1} of Theorem \ref{tataru1} is met, and $\frac{1}{3} c^+ < \theta$ is needed to ensure that the first inequality in \eqref{inequality} is valid.  This condition essentially reads, ``the speed of the S-wave must be more than a third that of the P-wave.''  For many materials, this is a realistic assumption (for instance, this is true of most earth materials), though as McLaughlin and Yoon note in \cite{MY}, in biological tissue, this assumption may not be reasonable. In spite of this, because the observation surface is not a true discontinuity in the medium, it may be reasonable to expect that there is no exchange between the P- and S-modes, allowing us to work with a stronger form of unique continuation and drop the assumptions on the wave speeds. We will not pursue this issue any further in the present work, however.

The gradient condition is somewhat more subtle, though making some additional hypotheses could help to simplify the situation.  For instance, if we were able to assume that the medium is homogeneous outside some neighborhood of $\Omega$ (i.e., $\lambda$ and $\mu$ are constant, so that $\grad a_j = 0$), then the gradient condition simply reads $\theta < c^-$ outside that neighborhood.  Alternately, if we were to make assumptions on the size of $\abs{\grad a_j}$, say $\abs{\grad a_j} < \delta a_j$ for some $\delta > 0$, then the condition could be reduced to
\begin{align*}
	\theta^2 (a_j + a_j^{1/2}\delta\abs{t}) + \frac{\delta}{2}\abs{x} < 1.
\end{align*}
Again, at this point in time, we will leave such additional assumptions for future work, and continue with the hypotheses in the form stated in the theorem.
\section{Energy of Initial Data and Solutions}
As we have previously discussed, the success of the reconstruction process depends on good local energy decay, ensured by the non-trapping condition. Before we move on to the reconstruction process, we will find it useful to specifically define two kinds of energy spaces associated with \eqref{PDEu}; first for the initial data, and then for solutions.  In what follows, $U$ is a domain in $\R^3$ (in practice, we will have $U = \Omega$ or $U = \R^3$).  We will begin by defining an inner product for the first space.
\begin{definition}
	For functions $f,g$, let
	\begin{align*}
	(f,g)_{H_D(U)} = \int_U \lambda(\divergence f)(\divergence g) + \mu \ \tr \left((\grad f)(\grad g) + (\grad f)^T(\grad g) \right) \ dx,
	\end{align*}
where $\tr$ indicates the trace.  Specifically, we have
	\begin{align*}
		\tr (\grad f)(\grad g) = \sum_{i,j = 1}^3 \frac{\partial f_j}{\partial x_i}\frac{\partial g_i}{\partial x_j}; \quad \tr (\grad f)^T(\grad g) = \sum_{i,j = 1}^3 \frac{\partial f_i}{\partial x_j}\frac{\partial g_i}{\partial x_j}.
	\end{align*}
\end{definition}
It is easy to see that $(\cdot,\cdot)_{H_D(U)}$ is symmetric and bilinear.  Furthermore,
\begin{lemma}
	For functions $f,g \in C^\infty_{0}(U)$, we have
	\begin{align*}
	(f,g)_{H_D(U)} = \innp{Pf}{g}_{L^2(U)} = \innp{f}{Pg}_{L^2(U)}.
	\end{align*}
\end{lemma}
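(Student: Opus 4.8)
The plan is to verify the identity by integration by parts, treating the two summands of $(f,g)_{H_D(U)}$ separately and matching each against the corresponding term in $\innp{Pf}{g}_{L^2(U)}$. Recall from the introduction that $-Pf = \divergence\paren{\mu((\grad f)+(\grad f)^T)} + \grad(\lambda\,\divergence f)$, so $\innp{Pf}{g} = -\int_U \left[\divergence\paren{\mu((\grad f)+(\grad f)^T)} + \grad(\lambda\,\divergence f)\right]\cdot g\,dx$. Since $f,g\in C^\infty_0(U)$, all boundary terms from Green's identities vanish, which is the only place compact support is used.

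First I would handle the $\lambda$-term: integrating $-\int_U \grad(\lambda\,\divergence f)\cdot g\,dx$ by parts moves the gradient onto $g$, giving $\int_U \lambda(\divergence f)(\divergence g)\,dx$, exactly the first term in the inner product. Next I would handle the $\mu$-term: writing $\divergence\paren{\mu((\grad f)+(\grad f)^T)}$ componentwise as $\sum_i \partial_i\paren{\mu(\partial_i f_j + \partial_j f_i)}$ for the $j$-th component, integrating by parts against $g_j$ and summing over $j$ yields $\int_U \mu \sum_{i,j}(\partial_i f_j + \partial_j f_i)\,\partial_i g_j\,dx$. The final step is bookkeeping: expand this sum and recognize $\sum_{i,j}\partial_j f_i\,\partial_i g_j = \tr(\grad f)(\grad g)$ and $\sum_{i,j}\partial_i f_j\,\partial_i g_j = \tr(\grad f)^T(\grad g)$ using the definitions given just above the lemma (note the index relabeling $i\leftrightarrow j$ is harmless since we sum over both), so the $\mu$-contribution matches the second term in $(f,g)_{H_D(U)}$. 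Adding the two pieces gives $(f,g)_{H_D(U)} = \innp{Pf}{g}_{L^2(U)}$.

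The second equality, $\innp{Pf}{g}_{L^2(U)} = \innp{f}{Pg}_{L^2(U)}$, then follows immediately from the symmetry and bilinearity of $(\cdot,\cdot)_{H_D(U)}$, already noted in the text: $\innp{Pf}{g} = (f,g)_{H_D(U)} = (g,f)_{H_D(U)} = \innp{Pg}{f} = \innp{f}{Pg}$.

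There is no real obstacle here; the only point requiring care is the index-matching in the $\mu$-term — making sure the symmetrized gradient $(\grad f)+(\grad f)^T$ paired with $\grad g$ reproduces precisely the combination $\tr\paren{(\grad f)(\grad g) + (\grad f)^T(\grad g)}$ rather than, say, a symmetrized version on the $g$ side as well. Writing everything out in coordinates from the start avoids any ambiguity.
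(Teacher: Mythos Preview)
Your proposal is correct and is essentially the paper's own argument: integrate by parts term-by-term (the paper packages this as three divergence identities and applies the divergence theorem, you write it in coordinates), use $g\in C^\infty_0(U)$ to kill the boundary contributions, and match the resulting integrand with the definition of $(f,g)_{H_D(U)}$. The only cosmetic difference is that for the second equality the paper repeats the integration by parts with the roles of $f$ and $g$ swapped, whereas you invoke the already-noted symmetry of $(\cdot,\cdot)_{H_D(U)}$; both are immediate.
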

\begin{proof}
Using the identities
\begin{align*}
	\left( \divergence (\mu (\grad f))\right)\cdot g = \divergence ((\mu (\grad f))g) - \mu \ \tr (\grad f)(\grad g),
\end{align*}
\begin{align*}
	\left( \divergence (\mu (\grad f)^T)\right)\cdot g = \divergence ((\mu (\grad f)^T)g) - \mu \ \tr (\grad f)^T(\grad g),
\end{align*}
and
\begin{align*}
	\left(\grad(\lambda (\divergence f))\right) \cdot  g = \divergence (\lambda (\divergence f) g) - \lambda(\divergence f)(\divergence g), 
\end{align*}
we have
\begin{align*}
	\innp{Pf}{g}_{L^2(U)} &= \int_U \lambda(\divergence f)(\divergence g) + \mu \ \tr \left((\grad f)(\grad g) + (\grad f)^T(\grad g) \right) \ dx \\
& \quad - \int_U \divergence ((\mu((\grad f) + (\grad f)^T)g)) + \divergence (\lambda (\divergence f) g) \ dx
\end{align*}
Since $g \in C^\infty_0(U)$, the second integral is zero by the divergence theorem, and we have $\innp{Pf}{g}_{L^2(U)} = (f,g)_{H_D(U)}$.  Similarly, using $f \in C^\infty_0(U)$ will show $(f,g)_{H_D(U)} = \innp{f}{Pg}_{L^2(U)}$.
\end{proof}

Next, we define $H_D(U)$ to be the completion of $C_0^\infty(U)$ under the norm
\begin{align*}
	\norm{f}_{H_D(U)}^2 = (f,f)_{H_D(U)}
\end{align*}
(this quantity is essentially the total elastic energy of $f$ in $U$, with a factor of $1/2$ omitted for convenience), and then define the energy space
\begin{align*}
	\mathcal{H}(U) = H_D(U)\oplus L^2(U)
\end{align*}
with the norm
\begin{align*}
	\norm{(f_1,f_2)}_{\mathcal{H}(U)} = \norm{f_1}_{H_D(U)} + \norm{f_2}_{L^2(U)}.
\end{align*}
Finally, we define the total energy for a function $u(t,x)$ at time $t$ as
\begin{align*}
	E_{U}(u,t) = \norm{u(t,\cdot)}_{H_D(U)}^2 + \norm{u_t(t,\cdot)}_{L^2(U)}^2,
\end{align*}
i.e., the total energy is the sum of the elastic and kinetic energies of $u$ at time $t$.  Before moving on, we note that, as a consequence of these definitions, if $u$ solves \eqref{PDEu} with $f \in \HDO$, then we have
\begin{align*}
	E_{\R^3}(u,0) =  \norm{f}_{H_D(\R^3)}^2 = \norm{f}_{\HDO}^2 = E_{\Omega}(u,0).
\end{align*}
\section{Reconstruction}
Now that we have all the necessary preliminaries in place, we are in the position to investigate the reconstruction process.  As suggested by our earlier discussion of techniques in TAT, one possible approach to reconstruction would be the use of time-reversal for solutions $v_0$ of \begin{align}
	\begin{cases}
		(\partial_t^2 + P) v_0 &= 0 \quad \textrm{  in  } (0,T) \times \Omega, \\
		v_0|_{[0,T]\times \partial \Omega} &= h, \\
		v_0|_{t=T} &= 0, \\
		\partial_t v_0 |_{t=T} &= 0,
	\end{cases}
\end{align}
where $h = \Lambda f$, and then attempt to define an inverse $A_0$ by
\begin{align*}
	A_0h := v_0(0,\cdot) \quad \textrm{in } \bar \Omega,
\end{align*}
with the hope that $A_0 h = A_0\Lambda f$ approximates $f$.  The problem with this method is that $h$ may not vanish on $\set{t = T} \times \partial \Omega$, causing the boundary conditions to be incompatible.

To correct this, we will need to introduce an error term, and modify our approach accordingly.  Given $h$ (again, eventually $h = \Lambda f$), define $\phi$ and $v$ by
\begin{align*}
	P\phi = 0, \quad \phi|_{\partial \Omega} = h(T,\cdot),
\end{align*}
\begin{align}
	\begin{cases}
		(\partial_t^2 + P) v &= 0 \quad \textrm{  in  } (0,T) \times \Omega, \\
		v|_{[0,T]\times \partial \Omega} &= h, \\
		v|_{t=T} &= \phi, \\
		\partial_t v |_{t=T} &= 0,
	\end{cases}
\label{PDEv}
\end{align}
and note that the boundary data is now compatible to first order, in that $\phi = h$ on $\set{t = T} \times \partial \Omega$.  Now, we define the pseudo-inverse $A$ by
\begin{align*}
	Ah := v(0,\cdot) \quad \textrm{in } \bar \Omega.
\end{align*}
Our goal now is to show that $A$ maps the range of $\Lambda$ to $\HDO$, and that the error $\norm{f - A\Lambda f}_{\HDO}$ is small compared to $\norm{f}_{\HDO}$; in particular, we will show that the operator $K= (I - A\Lambda)$ is a contraction on $\HDO$.  The following theorem is our main result. Recall that we say $(\Omega,P)$ is \textit{non-trapping} if every ray starting in $\Omega$ exits in finite time, and the supremum of these times (which we denote by $T(\Omega)$) is finite as well (see page~\pageref{microlocal} for more discussion).
\begin{theorem} Suppose that $f$ is compactly supported in $\Omega$, that $(\Omega,P)$ is non-trapping and satisfies the hypotheses of Theorem \ref{uniqueness}, with $T > T(\Omega)$.  Then $A\Lambda = I- K$, where $K$ is compact in $\HDO$ and $\norm{K}_{\HDO} < 1$.  In particular, $I - K$ is invertible on $\HDO$,  and we have the following Neumann expansion for $f$:
\begin{align*}
	f = \sum_{m = 0}^\infty K^mAh, \quad h := \Lambda f.
\end{align*}
\end{theorem}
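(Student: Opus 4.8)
The plan is to follow the Stefanov–Uhlmann time-reversal scheme from \cite{SU}, adapting each step to the elastic system. First I would set up the relevant operators on the energy space: the forward solution operator for \eqref{PDEu} restricted to $\Omega$, the harmonic extension $\phi$ of $h(T,\cdot)$ (which requires the static Lam\'e operator $P$ with Dirichlet data to be invertible — this follows from the positivity of $(\cdot,\cdot)_{H_D(\Omega)}$ and Lax–Milgram), and the back-propagation operator $A$. I would then derive the key identity: if $u$ solves \eqref{PDEu} with initial data $f$ and $h = \Lambda f$, then $w := u - v$ (with $v$ as in \eqref{PDEv}) solves $(\partial_t^2 + P)w = 0$ in $(0,T)\times\Omega$ with zero lateral boundary data, terminal data $w|_{t=T} = u(T,\cdot) - \phi$, and $w_t|_{t=T} = u_t(T,\cdot)$. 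Tracking back to $t=0$ gives $f - A\Lambda f = w(0,\cdot) =: Kf$, so $K$ is the map sending $f$ to $w(0,\cdot)$.

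The next step is the energy estimate showing $\norm{K}_{\HDO} < 1$. The standard argument: since $(\Omega,P)$ is non-trapping and $T > T(\Omega)$, the local energy $E_\Omega(u,t)$ decays, and in particular $E_\Omega(u,T) < E_\Omega(u,0) = \norm{f}_{\HDO}^2$ — strictly, because some positive fraction of the energy has escaped $\Omega$ by time $T$ (no nontrivial solution can keep all its energy inside a non-trapping $\Omega$ for time exceeding $T(\Omega)$; this is where Theorem \ref{uniqueness} enters, to rule out the degenerate case). The back-solved $w$ is, up to the harmonic-extension correction $\phi$, an energy-preserving evolution of the terminal data $(u(T,\cdot)-\phi, u_t(T,\cdot))$, so $\norm{Kf}_{\HDO}^2 = \norm{w(0,\cdot)}_{\HDO}^2 \leq E_\Omega(w,T) \lesssim E_\Omega(u,T)$ plus a controlled $\phi$-contribution; one checks $\phi$ is itself controlled by the trace of $u(T,\cdot)$ and hence by $E_\Omega(u,T)$. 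Assembling these inequalities yields $\norm{Kf}_{\HDO} \leq \norm{f}_{\HDO}$ with strict inequality, but to get a uniform bound $\norm{K} < 1$ rather than merely $\norm{Kf} < \norm{f}$ pointwise, I would argue by compactness.

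For compactness of $K$: the map $f \mapsto (u(T,\cdot), u_t(T,\cdot))$ followed by back-propagation is bounded on $\HDO$; the gain comes from the microlocal smoothing discussed on page~\pageref{microlocal} — since $(\Omega,P)$ is non-trapping, the singularities of $u$ have left $\Omega$ by time $T(\Omega)$, so $u(T,\cdot)$ is smooth in $\Omega$, giving a compact embedding and hence that $K$ (or an appropriate piece of it) is compact; the harmonic-extension term is finite-rank-like in its smoothing. Compactness plus $\norm{Kf}_{\HDO} < \norm{f}_{\HDO}$ for all $f \neq 0$ forces $\norm{K}_{\HDO} < 1$ (the supremum defining the operator norm would be attained on a sphere by compactness, giving a contradiction if it equalled $1$). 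Then $I - K$ is invertible by the Neumann series, and $f = (I-K)^{-1}A\Lambda f = \sum_{m\geq 0} K^m A h$.

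I expect the main obstacle to be the energy estimate for $w$ in the presence of the harmonic-extension correction $\phi$: unlike the scalar case, here one must verify that introducing $\phi$ to fix the first-order compatibility does not spoil the strict energy decrease, i.e. that $\norm{\phi}_{H_D(\Omega)}$ and the cross terms are genuinely dominated by $E_\Omega(u,T)$ and do not add back a full unit of energy. A secondary subtlety is that the energy identity for the elastic system — integrating $(\partial_t^2 + P)w \cdot w_t$ and using the Lemma relating $(\cdot,\cdot)_{H_D}$ to $\innp{P\cdot}{\cdot}$ — produces boundary terms on $(0,T)\times\partial\Omega$ involving the elastic conormal derivative $\mu((\grad w)+(\grad w)^T)\nu + \lambda(\divergence w)\nu$ rather than a simple normal derivative, so the bookkeeping of these traces must be done carefully, though it is structurally the same as in \cite{SU}.
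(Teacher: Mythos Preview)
Your overall strategy matches the paper's exactly: define $w=u-v$, identify $Kf=w(0,\cdot)$, run the energy chain, get strictness from Theorem~\ref{uniqueness}, and get compactness from smoothness of $u(T,\cdot)$ on $\bar\Omega$ when $T>T(\Omega)$.

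The one place where you are vague --- and which you correctly flag as your main obstacle --- has a clean one-line resolution that you are missing. The $\phi$-correction is handled by orthogonality, not by trace estimates: since $u^T-\phi$ vanishes on $\partial\Omega$ and $P\phi=0$, the Lemma relating $(\cdot,\cdot)_{H_D}$ to $\innp{P\cdot}{\cdot}_{L^2}$ gives
\[
(u^T-\phi,\phi)_{\HDO}=\innp{u^T-\phi}{P\phi}_{\LTO}=0,
\]
and hence the Pythagorean identity $\norm{u^T-\phi}_{\HDO}^2=\norm{u^T}_{\HDO}^2-\norm{\phi}_{\HDO}^2\leq\norm{u^T}_{\HDO}^2$. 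This yields $E_\Omega(w,T)\leq E_\Omega(u,T)$ with constant exactly $1$ and no cross terms to control. Your remark ``unlike the scalar case'' is mistaken: this is precisely the same mechanism as in \cite{SU}. By contrast, your proposed route via a trace bound $\norm{\phi}_{\HDO}\leq C\norm{u^T|_{\partial\Omega}}$ would produce a constant $C$ not obviously $\leq 1$, and then the chain would only give $\norm{Kf}_{\HDO}\leq C\norm{f}_{\HDO}$, which is useless for the Neumann series.

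One smaller clarification: the inequality $E_\Omega(u,T)\leq \norm{f}_{\HDO}^2$ is obtained as $E_\Omega(u,T)\leq E_{\R^3}(u,T)=E_{\R^3}(u,0)=E_\Omega(u,0)$, i.e.\ by conservation of \emph{total} energy in $\R^3$, not by local energy decay. The non-trapping hypothesis is used only in the compactness step (to make $u(T,\cdot)$, $u_t(T,\cdot)$ smooth on $\bar\Omega$); strictness of the energy inequality comes entirely from Theorem~\ref{uniqueness}, invoked when equality $E_\Omega(u,T)=E_{\R^3}(u,T)$ forces $u(T,\cdot)=0$ outside $\Omega$.
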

\begin{proof}
Let $w$ solve
\begin{align}
	\begin{cases}
		(\partial_t^2 + P) w &= 0 \quad \textrm{  in  } (0,T) \times \Omega, \\
		w|_{[0,T]\times \partial \Omega} &= 0, \\
		w|_{t=T} &= u|_{t=T} - \phi, \\
		\partial_t w |_{t=T} &= \partial_t u |_{t=T}.
	\end{cases}
\end{align}

Let $v$ be the solution of \eqref{PDEv} with $h = \Lambda f$.  Then $v + w$ will solve same initial boundary value problem as $u$ (with initial conditions given at $t = T$), and thus $u = v + w$.  For $t = 0$, we then have
\begin{align*}
	f = A \Lambda f + w(0,\cdot),
\end{align*}
and so,
\begin{align*}
	Kf = w(0,\cdot).
\end{align*}

For convenience of notation, let $u^T = u(T,\cdot)$ and $u_t^T = u_t(T,\cdot)$. Because $u^T = \phi$ on $\partial \Omega$, and because $P\phi = 0$,
\begin{align*}
	(u^T - \phi,\phi)_{\HDO} = \innp{u^T - \phi}{P\phi}_{\LTO} = 0.
\end{align*}
Thus,
\begin{align*}
	\norm{u^T -\phi}_{\HDO}^2 = \norm{u^T}_{\HDO}^2 - \norm{\phi}_{\HDO}^2 \leq \norm{u^T}_{\HDO}^2,
\end{align*}
showing that
\begin{align*}
	E_\Omega(w,T) = \norm{u^T - \phi}_{\HDO}^2 + \norm{u_t^T}_{\LTO}^2 \leq \norm{u^T}_{\HDO}^2 + \norm{u_t^T}_{\LTO}^2 = E_{\Omega}(u,T).
\end{align*}

Because $w = 0$ on $\partial \Omega$, we have $(w,w)_{H_D(\Omega)} = \innp{Pw}{w}_{\LTO}$ so that
\begin{align*}
	\frac{d}{dt} E_{\Omega}(w,t) &= \frac{d}{dt}\innp{Pw}{w}_{\LTO} + \frac{d}{dt}\innp{w'}{w'}_{\LTO}\\
	&= \innp{Pw'}{w}_{\LTO} + \innp{Pw}{w'}_{\LTO} + \innp{w''}{w'}_{\LTO} + \innp{w'}{w''}_{\LTO} \\
	&= \innp{Pw'}{w}_{\LTO} + \innp{w'}{w''}_{\LTO} \\
	&= \innp{w'}{Pw}_{\LTO} + \innp{w'}{w''}_{\LTO} = 0
\end{align*}
(note that this is just a reflection of the fact that the Dirichlet boundary conditions imposed on $w$ are energy-conserving).  Therefore,
\begin{align*}
	E_\Omega(w,0) = E_\Omega(w,T) \leq E_\Omega(u,T) \leq E_{\R^3}(u,T) = E_{\Omega}(u,0) = \norm{f}_{\HDO}^2,
\end{align*}
showing
\begin{align*}
	\norm{Kf}_{\HDO}^2 = \norm{w(0,\cdot)}_{\HDO}^2 \leq E_\Omega(w,0) \leq \norm{f}_{\HDO}^2.
\end{align*}

We next seek to show that this inequality is, in fact, strict.  So, suppose that there is some $f$ so that $\norm{Kf}_{\HDO} = \norm{f}_{\HDO}$.  This implies that all the above inequalities are actually equalities as well, and in particular, $E_\Omega (u,T) = E_{\R^3} (u,T)$, so that
\begin{align*}
	u(T,x) = 0, \quad \textrm{for } x \notin \Omega.
\end{align*}
By Theorem \ref{uniqueness}, we then have $f = 0$,  and thus $\norm{Kf}_{\HDO} < \norm{f}_{\HDO}$ for all non-zero $f \in \HDO$.

Now, we will show that $K$ is compact.  We have that $u(T,\cdot)$ and $u_t(T,\cdot)$ are smooth as functions on $\bar \Omega$ because $T > T(\Omega)$ (all singularities beginning in $\Omega$ will have left), and so, as linear operators acting on $f$, they have smooth Schwarz kernels.  This will also imply $\phi$ is smooth, by elliptic regularity.  From this, we can see that the map
\begin{align*}
	\HDO \to \mathcal H(U) : f \mapsto (u^T - \phi, u^T_t)
\end{align*}
has a smooth kernel, and is therefore compact.  Next, we note that the solution operator of \eqref{PDEv} from $t=T$ to $t=0$ (i.e., the map $ (u^T-\phi,u^T_t)\mapsto(w(0,\cdot),w_t(0,\cdot)$) is bounded in $\mathcal H (U)$ (unitary, actually, because $E_{\Omega}(w,T) = E_{\Omega}(w,0)$).  Thus, we  conclude that the map $K: \HDO \to \HDO, f \mapsto w(0,\cdot)$ is compact, as the composition of a compact and a bounded map, finally allowing us to conclude $\norm{K}_{\HDO} <1$, and the proof is complete.
\end{proof}
It is additionally worth noting that, in addition to uniqueness, the proof of this theorem also gives insight into the stability of this problem. In particular, we have
\begin{align*}
	\norm{Kf}_{\HDO} \leq \left( \frac{E_\Omega(u,T)}{E_\Omega(u,0)} \right)^{1/2}\norm{f}_{\HDO}, \quad \forall f \in \HDO, f\neq 0,
\end{align*}
providing a bound on $\norm{K}_{\HDO}$ in terms of the local energy decay. Furthermore, because $f - A\Lambda f = Kf$, this also describes the error in the reconstruction if we only use the first term $K^0Ah = A\Lambda f$ of the Neumann series. Additionally, if $T$ is chosen so that the hypotheses of Theorem \ref{uniqueness} are satisfied, but $ T \leq T(\Omega)$, then we can still conclude that $\norm{Kf}_{\HDO}~<~\norm{f}_{\HDO}$ for all non-zero $f$, but we can no longer be sure $K$ is compact, and thus have no reason to expect $\norm{K}_{\HDO}$ is \textit{strictly} less than $1$. Finally, if $T < \frac{R}{c^+}$, then one can find a non-zero $f$ so that $\norm{Kf}_{\HDO} = \norm{f}_{\HDO}$ (by choosing an $f$ supported in a small enough ball, finite speed of propagation will imply $u(T,x) = 0$ outside of $\Omega$).
\section{Conclusion}
While we have suggested a method of reconstruction for thermoacoustic tomography in elastic media, and demonstrated the uniqueness and stability of this method, there is still much work to be done.

Indeed, we have only explored the case of complete data, and in practical applications, such as breast imaging and geophysical imaging, one can only expect to be able to measure data on a portion of the boundary. Thus, examining the problem of incomplete data will be critical to the practical relevance of the elastic approach, and should be explored in future work. As previously mentioned, various authors (again, see \cite{SU}, \cite{XKA}, \cite{XWKA}) have investigated this problem for the acoustic wave equation, and produced satisfactory results; perhaps by adapting these methods, good results can be obtained for setting of elastic media as well.

Furthermore, we have worked with the restriction that $c^+ / 3 < c^-$, which, as mentioned, is a reasonable hypothesis for earth materials, but not necessarily for biological tissue. This assumption may be stricter than actually needed, and so it would be nice to find an alternative to using \cite{EINT} in the uniqueness step. At this point, it is not clear how to accomplish this, but perhaps by adapting the methods (rather than the results) of Eller, Isakov, Nakamura and Tataru some progress can be made. Indeed, in their work the coefficients $a_j$ are generally allowed to vary in time and be of relatively limited smoothness (e.g., $C^1$); possibly by relaxing these assumptions, a better unique continuation result can be obtained.

Finally, developing computer simulations and attaining numerical data based on the method proposed in this paper are an obvious next step for this approach to mature from theory to application. As we discussed earlier, this has already been done for the acoustic case (in \cite{QSUZ} and others), and hopefully computational methods can be developed for elastic media in the near future.
\section*{Acknowledgments}
This research was generously supported under NSF grant DMS-0838212.

\end{document}